\documentclass[10pt, reqno]{amsart}
\usepackage{amsmath}
\usepackage{amsfonts}
\usepackage{amssymb}
\usepackage{amsthm}
\usepackage[utf8]{inputenc}
\usepackage{bm}
\usepackage{bbm}
\usepackage{graphicx}
\usepackage[dvipsnames]{xcolor}
\usepackage{hyperref}
\usepackage{url}
\usepackage{subcaption}
\usepackage{mathdots}
\usepackage[sc]{mathpazo}
\usepackage[T1]{fontenc} 
\usepackage{mathtools}

\captionsetup{width=0.95\textwidth}


\usepackage{enumitem}

\mathtoolsset{showonlyrefs=true}

\usepackage[font=small,labelfont=bf]{caption}

\theoremstyle{plain}
\newtheorem{theorem}{Theorem}
\newtheorem{proposition}[theorem]{Proposition}
\newtheorem{lemma}[theorem]{Lemma}

\theoremstyle{definition}

\newtheorem{remark}[theorem]{Remark}

\numberwithin{equation}{section}

\newcommand{\C}{\mathbb{C}}
\newcommand{\F}{\mathcal{F}}
\newcommand{\N}{\mathbb{N}}
\newcommand{\R}{\mathbb{R}}
\newcommand{\E}{\mathbb{E}}
\newcommand{\M}{\mathrm{M}}

\renewcommand{\P}{\mathbb{P}}
\newcommand{\h}{\mathrm{H}}
\newcommand{\LL}{\boldsymbol{\lambda}}
\newcommand{\X}{\vec{X}}

\newcommand{\tr}{\operatorname{tr}}
\newcommand{\diag}{\operatorname{diag}}

\renewcommand{\vec}[1]{\mathbf{#1}}
\newcommand{\norm}[1]{\| #1 \|}

\newcommand{\bvec}[1]{\boldsymbol{#1}}
\newcommand{\TT}{\mathrm{T}}

\let\oldenumerate=\enumerate
	\def\enumerate{
	\oldenumerate
	\setlength{\itemsep}{5pt}
	}
\let\olditemize=\itemize
	\def\itemize{
	\olditemize
	\setlength{\itemsep}{5pt}
	}

\graphicspath{ {./images/} }

\begin{document}

\title[Norms on complex matrices induced by random vectors II]{Norms on complex matrices induced by random vectors II: extension of weakly unitarily invariant norms}
\author[\'A. Ch\'avez]{\'Angel Ch\'avez}
\address{Mathematics Department, Regis University, 3333 Regis Blvd., Denver, CO 80221} 
	\email{chave360@regis.edu}

\author[S.R.~Garcia]{Stephan Ramon Garcia}
\email{stephan.garcia@pomona.edu}
\address{Department of Mathematics and Statistics, Pomona College, 610 N. College Ave. Claremont, CA 91711} 
\urladdr{\url{https://stephangarcia.sites.pomona.edu/}}

\author[J. Hurley]{Jackson Hurley}
\email{jacksonwhurley@gmail.com}

\thanks{SRG partially supported by NSF grant DMS-2054002.}

\begin{abstract}
We improve and expand in two directions the theory of norms on complex matrices induced by random vectors. We first provide a simple proof of the classification of weakly unitarily invariant norms on the Hermitian matrices.  We use this to extend the main theorem in \cite{Ours} from exponent $d\geq 2$ to $d \geq 1$.  Our proofs are much simpler than the originals: they do not require Lewis' framework for group invariance in convex matrix analysis.  This clarification puts the entire theory on simpler foundations while extending its range of applicability.
\end{abstract}

\keywords{}
\subjclass[2000]{47A30, 15A60, 16R30}
\maketitle

\section{Introduction}

 A norm $\norm{\cdot}$ on $\M_n$, the space of $n\times n$ complex matrices, is \emph{unitarily invariant} 
if $\norm{UAV}=\norm{A}$ for all $A\in \M_n$ and unitary $U,V \in \M_n$.  A norm on $\R^n$ which is invariant under entrywise sign changes and permutations is a \emph{symmetric gauge function}. A theorem of von Neumann asserts that any unitarily invariant norm on $\M_n$ is a symmetric gauge function applied to the singular values \cite[Thm.~7.4.7.2]{Horn}. 
For example, the Schatten norms are unitarily invariant and defined for $d\geq 1$ by
\begin{equation*}
    \norm{A}_{S_d}=\big( |\sigma_1|^d+|\sigma_2|^d+\cdots+ |\sigma_n|^d\big)^{1/d},
\end{equation*} 
in which $\sigma_1 \geq \sigma_2 \geq  \cdots \geq \sigma_n \geq 0$ are the singular values of $A\in \M_n$. 

A norm $\norm{\cdot}$ on the $\R$-vector space $\h_n$ of $n\times n$ complex Hermitian matrices is \emph{weakly unitarily invariant} if 
$\norm{U^*AU}=\norm{A}$ for all $A\in \h_n$ and unitary $U \in \M_n$. 
For example, the numerical radius 
\begin{equation*}
r(A) = \sup_{  \vec{x} \in \C^n \backslash \{\vec{0}\}} \frac{ \langle A \vec{x}, \vec{x} \rangle}{\langle \vec{x} , \vec{x} \rangle}
\end{equation*}
is a weakly unitarily invariant norm on $\h_n$ \cite{LiUnitary}.
Lewis proved that any weakly unitarily invariant norm on $\h_n$ is a symmetric vector norm applied to the eigenvalues \cite[Sec.~8]{LewisGroupInvariance}.

Our first result is a short proof of Lewis' theorem that avoids his theory of group invariance in convex matrix analysis \cite{LewisGroupInvariance}, the wonderful but complicated framework that underpins \cite{First,Ours}.  Our new approach
uses more standard techniques, such as Birkhoff's theorem on doubly stochastic matrices \cite{Birkhoff}.

\begin{theorem}\label{Theorem:Main}
    A norm $\norm{\cdot}$ on $\h_n$ is  weakly unitarily invariant if and only if
    there is a symmetric norm $f:\R^n\to\R$ such that $\norm{A}=f( \lambda_1, \lambda_2, \ldots, \lambda_n)$
    for all $A\in \h_n$. Here, $\lambda_1 \geq \lambda_2 \geq \cdots \geq \lambda_n$ are the eigenvalues of $A$.
\end{theorem}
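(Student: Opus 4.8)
The plan is to prove the two implications separately, with all of the substance concentrated in the converse. For the forward direction (``only if''), suppose $\norm{\cdot}$ is weakly unitarily invariant and define $f:\R^n\to\R$ by $f(\vec{x})=\norm{\diag(x_1,\ldots,x_n)}$. The first thing I would check is that $f$ is a symmetric norm. The three norm axioms transfer immediately from $\norm{\cdot}$ since $\vec{x}\mapsto\diag(\vec{x})$ is a linear injection of $\R^n$ into $\h_n$, and permutation invariance of $f$ holds because permuting the entries of $\vec{x}$ is the same as conjugating $\diag(\vec{x})$ by a permutation matrix, which is unitary. To identify $\norm{A}$ with $f$, I would invoke the spectral theorem to write $A=V\Lambda V^*$ with $V$ unitary and $\Lambda=\diag(\lambda_1,\ldots,\lambda_n)$; then weak unitary invariance yields $\norm{A}=\norm{V\Lambda V^*}=\norm{\Lambda}=f(\lambda_1,\ldots,\lambda_n)$. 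This direction is essentially formal once $f$ is set up correctly.

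For the converse (``if''), start from a symmetric norm $f$ and define $N(A)=f(\lambda_1,\ldots,\lambda_n)$. Because $f$ is permutation invariant, $N$ does not depend on the chosen ordering of the eigenvalues, and because similar Hermitian matrices share eigenvalues, $N$ is automatically weakly unitarily invariant. Homogeneity and positive-definiteness are routine: the eigenvalues of $cA$ are $\{c\lambda_i\}$, so $N(cA)=|c|\,N(A)$ since $f$ is a norm, while $N(A)=0$ forces every eigenvalue to vanish, whence $A=0$ because $A$ is Hermitian. The one axiom that is not immediate, and which I expect to be the \textbf{main obstacle}, is the triangle inequality for $N$.

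My strategy for the triangle inequality is the chain
\begin{equation*}
N(A+B)=f\big(\lambda^{\downarrow}(A+B)\big)\;\le\; f\big(\lambda^{\downarrow}(A)+\lambda^{\downarrow}(B)\big)\;\le\; f\big(\lambda^{\downarrow}(A)\big)+f\big(\lambda^{\downarrow}(B)\big)=N(A)+N(B),
\end{equation*}
where $\lambda^{\downarrow}$ denotes the eigenvalues in decreasing order. The second inequality is merely the triangle inequality for $f$ on $\R^n$, so the real content is the first inequality, which I would deduce from two standard facts. First, the majorization $\lambda^{\downarrow}(A+B)\prec\lambda^{\downarrow}(A)+\lambda^{\downarrow}(B)$: the partial-sum inequalities follow from Ky Fan's maximum principle, since the sum of the $k$ largest eigenvalues is a subadditive function of a Hermitian matrix, and equality of the full sums is the trace identity $\tr(A+B)=\tr A+\tr B$. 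Second, the monotonicity of symmetric norms under majorization, namely $\vec{x}\prec\vec{y}\Rightarrow f(\vec{x})\le f(\vec{y})$, and this is precisely where Birkhoff's theorem enters: $\vec{x}\prec\vec{y}$ means $\vec{x}=D\vec{y}$ for a doubly stochastic $D$, Birkhoff expresses $D=\sum_i c_i P_i$ as a convex combination of permutation matrices, and then convexity of the norm together with the permutation invariance of $f$ give $f(\vec{x})=f(\sum_i c_i P_i\vec{y})\le\sum_i c_i f(P_i\vec{y})=\sum_i c_i f(\vec{y})=f(\vec{y})$. Assembling Ky Fan majorization with this Birkhoff-based monotonicity closes the triangle inequality, and with it the theorem; everything outside this one step is bookkeeping.
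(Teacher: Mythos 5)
Your proposal is correct and follows essentially the same route as the paper: the forward direction via restriction to diagonal matrices and the spectral theorem, and the converse's triangle inequality via Ky Fan's inequality plus the trace identity to get $\bm{\lambda}(A+B)\prec\bm{\lambda}(A)+\bm{\lambda}(B)$, then Hardy--Littlewood--P\'olya and Birkhoff to convert majorization into a convex combination of permutations absorbed by the symmetry and convexity of $f$. The only cosmetic difference is that you package the last step as a standalone monotonicity lemma (symmetric norms respect majorization), whereas the paper inlines the same argument in its proof of the triangle inequality.
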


The \emph{random-vector norms} of the next theorem are weakly unitarily invariant norms on $\h_n$ that extend to weakly unitarily invariant norms on $\M_n$; see Theorem \ref{Theorem:Big} below.
They appeared in \cite{Ours} and they generalize the complete homogeneous symmetric polynomial norms of \cite[Thm.~1]{First}. 
The original proof of  \cite[Thm.~1.1 (a)]{Ours} requires $d \geq 2$ and relies heavily on Lewis' framework for group invariance in convex matrix analysis \cite{LewisGroupInvariance}. However, Theorem \ref{Theorem:Second} now follows directly from Theorem \ref{Theorem:Main}. Moreover, Theorem \ref{Theorem:Second} generalizes  \cite[Thm.~1.1 (a)]{Ours} to the case $d\geq 1$. 

\begin{theorem}\label{Theorem:Second}
    Let $d\geq 1$ be real and $\vec{X}$ be an \emph{iid random vector} in $\R^n$, that is, the entries of
    $\vec{X}=(X_1,X_2, \ldots, X_n)$ are nondegenerate iid random variables. Then
    \begin{equation}\label{eq:NormHermitian}
        \norm{A}_{\vec{X},d} = \left( \frac{\E |\langle\vec{X}, \bm{\lambda}\rangle|^d}
                {\Gamma(d+1)}\right)^{1/d}
    \end{equation} 
    is a weakly unitarily invariant norm on $\h_n$. Here, $\Gamma(\cdot)$ denotes the gamma function and 
    $\boldsymbol{\lambda}=(\lambda_1,\lambda_2, \ldots, \lambda_n)$ denotes the vector of eigenvalues 
    $\lambda_1 \geq \lambda_2 \geq \cdots \geq \lambda_n$ of $A$.
    Moreover, if the entries of $\vec{X}$ each have at least $m$ moments, then for all $A\in\h_n$ the function $f:[1,m] \to \R$ defined by 
    $f(d) =\norm{A}_{\vec{X},d}$ is continuous. 
\end{theorem}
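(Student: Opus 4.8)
The plan is to prove Theorem~\ref{Theorem:Second} by verifying that $\norm{\cdot}_{\vec{X},d}$ is a norm satisfying the hypotheses of Theorem~\ref{Theorem:Main}, and then to handle the continuity claim separately. First I would observe that the defining expression \eqref{eq:NormHermitian} depends on $A$ only through the unordered multiset of its eigenvalues, so the natural candidate for the symmetric function is $g(\bvec{t}) = \bigl(\E|\langle \vec{X},\bvec{t}\rangle|^d/\Gamma(d+1)\bigr)^{1/d}$ on $\R^n$. Since $\vec{X}$ has iid entries, the distribution of $\langle \vec{X},\bvec{t}\rangle = \sum_i X_i t_i$ is invariant under permuting the coordinates of $\bvec{t}$, so $g$ is a symmetric function; weak unitary invariance then follows immediately because the eigenvalue multiset is a unitary invariant. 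By Theorem~\ref{Theorem:Main}, it therefore suffices to show that $g$ is a genuine norm on $\R^n$.

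Establishing that $g$ is a norm breaks into the three norm axioms. Absolute homogeneity is clear: scaling $\bvec{t}$ by $c$ scales $\langle \vec{X},\bvec{t}\rangle$ by $c$, and the $d$-th root of $\E|c\langle\vec{X},\bvec{t}\rangle|^d$ returns $|c|\,g(\bvec{t})$. The triangle inequality is exactly Minkowski's inequality for the $L^d(\P)$ norm (valid precisely because $d\geq 1$): writing $\|Y\|_{L^d} = (\E|Y|^d)^{1/d}$, we have $g(\bvec{s}+\bvec{t}) = C\,\|\langle\vec{X},\bvec{s}\rangle + \langle\vec{X},\bvec{t}\rangle\|_{L^d} \leq C\bigl(\|\langle\vec{X},\bvec{s}\rangle\|_{L^d} + \|\langle\vec{X},\bvec{t}\rangle\|_{L^d}\bigr) = g(\bvec{s})+g(\bvec{t})$, where $C = \Gamma(d+1)^{-1/d}$. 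This is precisely where the improvement from $d\geq 2$ to $d\geq 1$ comes from, since Minkowski's inequality holds for all $d\geq 1$. The only genuinely substantive axiom is positive-definiteness: I must show $g(\bvec{t}) = 0$ forces $\bvec{t} = \vec{0}$. Here I would argue that if $\bvec{t}\neq\vec{0}$, then $\langle\vec{X},\bvec{t}\rangle$ is a nondegenerate random variable (some $t_i\neq 0$ and $X_i$ is nondegenerate, with the entries independent), so $\E|\langle\vec{X},\bvec{t}\rangle|^d > 0$; the nondegeneracy hypothesis on the $X_i$ is exactly what rules out the linear combination collapsing to the constant $0$.

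For the continuity claim, fix $A\in\h_n$ with eigenvalue vector $\bvec{\lambda}$ and set $Y = \langle\vec{X},\bvec{\lambda}\rangle$, so that $f(d) = (\E|Y|^d/\Gamma(d+1))^{1/d}$. Since $\Gamma$ is continuous and positive on $[1,m]$, it suffices to show $d\mapsto \E|Y|^d$ is continuous on $[1,m]$. The hypothesis that each $X_i$ has at least $m$ moments, together with $|Y|\leq \sum_i|X_i||\lambda_i|$ and Minkowski, gives $\E|Y|^m < \infty$; this furnishes an integrable dominating function. Continuity of $d\mapsto \E|Y|^d$ then follows from the dominated convergence theorem: for $d_k\to d$ in $[1,m]$, we have $|Y|^{d_k}\to|Y|^d$ pointwise, and $|Y|^{d_k}\leq \max(1,|Y|^m)$ provides an integrable dominant. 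Taking the continuous $d$-th root and dividing by $\Gamma(d+1)$ finishes the argument.

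I expect the main obstacle to be the positive-definiteness step rather than the triangle inequality. The subtlety is that one must use both the nondegeneracy of the individual $X_i$ and their independence to conclude that a nontrivial linear combination $\sum_i t_i X_i$ cannot be almost surely zero; a careful argument (for instance, via characteristic functions or by conditioning) is needed to ensure that cancellation among the summands does not produce a degenerate sum. The dominating-function bookkeeping for the continuity statement is routine once the finiteness of the $m$-th moment of $Y$ is recorded.
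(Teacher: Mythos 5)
Your proposal is correct and follows essentially the same route as the paper: reduce to showing the symmetric function $\bvec{t}\mapsto\bigl(\E|\langle\vec{X},\bvec{t}\rangle|^d/\Gamma(d+1)\bigr)^{1/d}$ is a norm on $\R^n$ via Theorem~\ref{Theorem:Main}, get the triangle inequality from Minkowski's inequality (the source of the extension to $d\geq 1$), get positive definiteness from the fact that independence and nondegeneracy prevent a nontrivial linear combination $\sum_i t_i X_i$ from vanishing almost surely, and prove continuity in $d$ by dominated convergence with a dominating function built from the $m$th moment. The only cosmetic differences are your dominating function $\max(1,|Y|^m)$ versus the paper's $|Y|+|Y|^m$, and your suggestion of characteristic functions for the positive-definiteness step where the paper instead isolates one variable and contradicts independence directly.
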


The simplified proof of Theorem \ref{Theorem:Main} and the extension of Theorem \ref{Theorem:Second} from $d\geq 2$ to 
$d \geq 1$ permits the main results of \cite{Ours}, restated below as Theorem \ref{Theorem:Big}, to rest on simpler foundations 
while enjoying a wider range of applicability.
The many perspectives offered in Theorem \ref{Theorem:Big} below explain the normalization in \eqref{eq:NormHermitian}. 

 \begin{theorem}\label{Theorem:Big}
    Let $\X=(X_1, X_2, \ldots, X_n)$, in which
    $X_1, X_2, \ldots, X_n \in L^d(\Omega,\F,\P)$ are nondegenerate iid random variables.
    Let   $\boldsymbol{\lambda}=(\lambda_1,\lambda_2, \ldots, \lambda_n)$ denote the vector of eigenvalues 
    $\lambda_1 \geq \lambda_2 \geq \cdots \geq \lambda_n$ of $A \in \h_n$.    
    \addtolength{\leftmargini}{-18pt}
    \begin{enumerate}\addtolength{\itemsep}{0pt}
    \item For real $d\geq 1$, $\norm{A}_{\X,d}= \bigg(\dfrac{  \E |\langle \X, \LL\rangle|^d}{\Gamma(d+1)} \bigg)^{1/d}$ is a norm on $\h_n$.
           \hfill (now by Theorem \ref{Theorem:Second}).
    
    \item If the $X_i$ admit a moment generating function $M(t) = \E [e^{tX}] = \sum_{k=0}^{\infty} \E [X^k] \frac{t^k}{k!}$ and $d \geq 2$ is an even integer, then
    $\norm{A}_{\X,d}^d$ is the coefficient of $t^d$ in $M_{\Lambda}(t)$ for all $A \in \h_n$,
    in which $M_{\Lambda}(t) = \prod_{i=1}^n M(\lambda_i t)$ is the moment generating
    function for the random variable 
    $\Lambda =\langle \X, \LL(A) \rangle=\lambda_1X_1+\lambda_2X_2+\cdots +\lambda_n X_n$.
    In particular, $\norm{A}_{\X,d}$ is a positive definite, homogeneous,
    symmetric polynomial in the eigenvalues of $A$.
    
    \item Let $d\geq 2$ be an even integer. If the first $d$ moments of $X_i$ exist, then
    \begin{equation*}
    \norm{A}_{\X,d}^d
    = \frac{1}{d!} B_{d}(\kappa_1\tr A, \kappa_2\tr A^2, \ldots, \kappa_d\tr A^d) \label{eq:MainBell} 
    = \sum_{\bvec{\pi}\vdash d}\frac{\kappa_{\bvec{\pi}}p_{\bvec{\pi}} (\bvec{\lambda})}{y_{\bvec{\pi}}}
    \quad \text{for $A \in \h_n$}, 
    \end{equation*}
    in which:
    \begin{enumerate}
    	\item $\bvec{\pi}=(\pi_1, \pi_2, \ldots, \pi_r) \in \N^r$ is a \emph{partition} of $d$; that is, $\pi_1 \geq \pi_2 \geq \cdots \geq \pi_r$ and
            $\pi_1+ \pi_2 + \cdots + \pi_r = d$ \cite[Sec.~1.7]{StanleyBook1}; we denote this $\bvec{\pi} \vdash d$. 

    	\item $p_{\bvec{\pi}}(x_1, x_2, \ldots, x_n)=p_{\pi_1}p_{\pi_2}\cdots p_{\pi_r}$,
in which $p_k(x_1,x_2, \ldots, x_n)=x_1^k+x_2^k+\cdots +x_n^k$ is a \emph{power-sum symmetric polynomial}.

    	\item $B_d$ is a complete Bell polynomial, defined by $\sum_{\ell=0}^{\infty} B_{\ell}(x_1, x_2, \ldots, x_{\ell}) \frac{t^{\ell}}{\ell !}\break=\exp( \sum_{j=1}^{\infty} x_j \frac{t^j}{j!})$ \cite[Sec.~II]{Bell}.
	
    	\item The \emph{cumulant}s $\kappa_1, \kappa_2, \ldots, \kappa_d$ are defined by the recursion
            $\mu_r=\sum_{\ell=0}^{r-1}{r-1\choose \ell} \mu_{\ell}\kappa_{r-\ell}$ for $1 \leq r \leq d$, in which
            $\mu_r = \E[X_1^r]$ is the $r$th moment of $X_1$ \cite[Sec.~9]{Billingsley}.
	
    	\item $\kappa_{\bvec{\pi}} = \kappa_{\pi_1} \kappa_{\pi_2} \cdots \kappa_{\pi_{r}}$ and
		$y_{\bvec{\pi}}=\prod_{i\geq 1}(i!)^{m_i}m_i!$, in which $m_i=m_i(\bvec{\pi})$ is the multiplicity of $i$ in $\bvec{\pi}$.
    \end{enumerate}

    \item For real $d\geq 1$, the function $\bvec{\lambda}(A) \mapsto \norm{A}_{\X,d}$ is Schur convex; that is, it respects majorization $\prec$;
    	see \eqref{eq:Majorization}.
    
    \item Let $d\geq 2$ be an even integer. Define $\TT_{\bvec{\pi}} : \M_{n}\to \R$
    by setting $\TT_{\bvec{\pi}}(Z)$ to be $1/{d\choose d/2}$ times the sum over the $\binom{d}{d/2}$ 
    possible locations to place $d/2$ adjoints ${}^*$ among the $d$ copies of $Z$ in
    $(\tr \underbrace{ZZ\cdots Z}_{\pi_1})
    (\tr \underbrace{ZZ\cdots Z}_{\pi_2})
    \cdots
    (\tr \underbrace{ZZ\cdots Z}_{\pi_r})$.
    Then
    \begin{equation}\label{eq:Extended}
    \norm{Z}_{\X,d}= \bigg( \sum_{\bvec{\pi} \,\vdash\, d} \frac{ \kappa_{\bvec{\pi}}\TT_{\bvec{\pi}}(Z)}{y_{\bvec{\pi}}}\bigg)^{1/d}
    \end{equation} 
    is a norm on $\M_n$ that restricts to the norm on $\h_n$ above.  In particular, $\norm{Z}_{\X,d}^d$ 
    is a positive definite trace polynomial in $Z$ and $Z^*$.
    \end{enumerate}
\end{theorem}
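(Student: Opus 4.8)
The plan is to treat the five assertions as a descent from a single analytic fact about the symmetric function $f(\bvec{x}) = \big(\E|\langle\X,\bvec{x}\rangle|^d/\Gamma(d+1)\big)^{1/d}$ on $\R^n$. Item (a) is immediate from Theorem \ref{Theorem:Second}: nondegeneracy and independence of the $X_i$ make $f$ symmetric and positive definite, $f$ is absolutely homogeneous by linearity of $\bvec{x}\mapsto\langle\X,\bvec{x}\rangle$, and the triangle inequality for $f$ is exactly Minkowski's inequality in $L^d(\Omega)$, which is where $d\geq 1$ enters; Theorem \ref{Theorem:Main} then upgrades $A\mapsto f(\bvec{\lambda}(A))$ to a weakly unitarily invariant norm on $\h_n$. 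Item (d) follows at once: $f$ is a symmetric norm, hence symmetric and convex, and a symmetric convex function on $\R^n$ is Schur convex, so $\bvec{\lambda}(A)\mapsto\norm{A}_{\X,d} = f(\bvec{\lambda}(A))$ respects majorization.

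For item (b), with $d$ an even integer we have $|\Lambda|^d=\Lambda^d$ and $\Gamma(d+1)=d!$, so $\norm{A}_{\X,d}^d=\E[\Lambda^d]/d!$. Independence of the $X_i$ factors the moment generating function as $M_\Lambda(t)=\E\, e^{t\sum_i\lambda_iX_i}=\prod_{i=1}^nM(\lambda_it)$, and reading off the coefficient of $t^d$ in $M_\Lambda(t)=\sum_{k\ge0}\E[\Lambda^k]t^k/k!$ returns precisely $\E[\Lambda^d]/d!$. Expanding $\Lambda^d=(\sum_i\lambda_iX_i)^d$ by the multinomial theorem and taking expectations term by term (using independence to turn $\E[X_{j_1}\cdots X_{j_d}]$ into a product of moments depending only on the multiplicities) exhibits $\norm{A}_{\X,d}^d$ as a homogeneous symmetric polynomial of degree $d$ in $\bvec{\lambda}$; positive definiteness is inherited from item (a).

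Item (c) is the cumulant bookkeeping. The recursion defining $\kappa_1,\ldots,\kappa_d$ is the classical moment--cumulant relation, equivalent to $\log M(t)=\sum_{j\ge1}\kappa_jt^j/j!$, so
\[
M_\Lambda(t)=\prod_{i=1}^nM(\lambda_it)=\exp\Big(\sum_{j\ge1}\frac{\kappa_j}{j!}\Big(\sum_{i=1}^n\lambda_i^j\Big)t^j\Big)=\exp\Big(\sum_{j\ge1}\frac{\kappa_j\tr A^j}{j!}\,t^j\Big).
\]
Matching this against the defining generating function of the complete Bell polynomials identifies the coefficient of $t^d$ as $\tfrac1{d!}B_d(\kappa_1\tr A,\ldots,\kappa_d\tr A^d)$, which equals $\norm{A}_{\X,d}^d$ by item (b). The partition form is then the standard monomial expansion $B_d(x_1,\ldots,x_d)=\sum_{\bvec{\pi}\vdash d}\frac{d!}{y_{\bvec{\pi}}}\prod_jx_j^{m_j(\bvec{\pi})}$ evaluated at $x_j=\kappa_j\tr A^j=\kappa_jp_j(\bvec{\lambda})$, since $\prod_jx_j^{m_j}=\kappa_{\bvec{\pi}}\,p_{\bvec{\pi}}(\bvec{\lambda})$.

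Item (e), the extension to $\M_n$, is the crux. The plan is to realize the trace polynomial as an average of the Hermitian norm over a circle of Hermitian sections. For $Z\in\M_n$ and $\theta\in[0,2\pi)$ set $H_\theta=\tfrac12(e^{i\theta}Z+e^{-i\theta}Z^*)\in\h_n$; this is real-linear in $Z$ and reduces to $\cos\theta\,A$ when $Z=A$ is Hermitian. Expanding $\prod_i\tr(H_\theta^{\pi_i})$ writes it as a Laurent polynomial in $e^{i\theta}$ whose constant mode collects exactly the trace monomials carrying $d/2$ copies of $Z^*$ among the $d$ factors, each with weight $2^{-d}$; averaging over $\theta$ therefore yields $\frac1{2\pi}\int_0^{2\pi}\prod_i\tr(H_\theta^{\pi_i})\,d\theta=2^{-d}\binom{d}{d/2}\TT_{\bvec{\pi}}(Z)$. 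Summing against $\kappa_{\bvec{\pi}}/y_{\bvec{\pi}}$ and invoking item (c) for each Hermitian $H_\theta$ gives the representation
\[
\norm{Z}_{\X,d}^d=\sum_{\bvec{\pi}\vdash d}\frac{\kappa_{\bvec{\pi}}\TT_{\bvec{\pi}}(Z)}{y_{\bvec{\pi}}}=\frac{2^d}{\binom{d}{d/2}}\cdot\frac1{2\pi}\int_0^{2\pi}\norm{H_\theta}_{\X,d}^d\,d\theta.
\]
From here the norm axioms are inherited from the Hermitian norm: the right side is a positive constant times the $d$th power of the $L^d([0,2\pi])$ norm of the continuous map $\theta\mapsto\norm{H_\theta}_{\X,d}$, so positive definiteness holds because $\theta\mapsto H_\theta$ vanishes identically only when $Z=0$; the triangle inequality follows by applying the Hermitian triangle inequality pointwise in $\theta$ and then Minkowski in $L^d$; and absolute homogeneity over $\C$ follows because $H_\theta(cZ)=|c|\,H_{\theta+\arg c}(Z)$ and the integral is rotation invariant. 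Restriction to $\h_n$ is automatic since $\TT_{\bvec{\pi}}(A)=p_{\bvec{\pi}}(\bvec{\lambda})$, and weak unitary invariance is visible from the trace structure. The main obstacle is the combinatorial identity matching the constant Fourier mode of $\prod_i\tr(H_\theta^{\pi_i})$ with the symmetrized trace $\TT_{\bvec{\pi}}$; once that bookkeeping is pinned down, the analytic norm properties are a soft consequence of Minkowski's inequality.
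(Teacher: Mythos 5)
Your proposal should first be measured against what this paper actually does with Theorem~\ref{Theorem:Big}: the paper does not reprove it. Items (b)--(e) are imported wholesale from \cite{Ours}; the only new content here is item (a), obtained by combining Theorem~\ref{Theorem:Second} (that $f_{\vec{X},d}$ is a symmetric norm on $\R^n$ for all real $d\geq 1$) with Theorem~\ref{Theorem:Main}. Your treatment of (a) coincides with that, and your item (d) --- norms are convex, and a symmetric convex function on $\R^n$ is Schur convex --- is exactly the sort of elementary replacement for Lewis' group-invariance machinery that this paper advocates. Your items (b) and (c) are the standard generating-function and cumulant computations of \cite{Ours}, and your item (e) reconstructs, essentially from scratch, the circular-averaging (Hermitization) argument underlying the extension in \cite{First,Ours}. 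That reconstruction is sound: with $H_\theta = \tfrac12(e^{i\theta}Z + e^{-i\theta}Z^*)$, only the balanced terms (equal numbers of $Z$ and $Z^*$) survive the average, so $\frac{1}{2\pi}\int_0^{2\pi}\prod_j \tr(H_\theta^{\pi_j})\,d\theta = 2^{-d}\binom{d}{d/2}\TT_{\bvec{\pi}}(Z)$; summing against $\kappa_{\bvec{\pi}}/y_{\bvec{\pi}}$ and applying item (c) to each Hermitian $H_\theta$ gives $\norm{Z}_{\X,d}^d = \frac{2^d}{\binom{d}{d/2}}\cdot\frac{1}{2\pi}\int_0^{2\pi}\norm{H_\theta}_{\X,d}^d\,d\theta$, from which positive definiteness ($H_0=H_{\pi/2}=0$ forces $Z=0$), complex homogeneity (rotation invariance of the integral), and the triangle inequality (pointwise subadditivity of $\theta\mapsto\norm{H_\theta(Z)}_{\X,d}$ plus Minkowski on $L^d$ of the circle) all follow softly, and the restriction claim checks out since $\TT_{\bvec{\pi}}(A)=p_{\bvec{\pi}}(\bvec{\lambda}(A))$ for Hermitian $A$. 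So your write-up is a correct, self-contained proof of considerably more than the present paper attempts to prove.

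One genuine wrinkle needs repair: you derive item (c) ``by item (b),'' but (c) assumes only that the first $d$ moments exist, whereas (b) assumes a moment generating function. These hypotheses are not equivalent --- the paper's own Pareto example (Section~\ref{Section:Examples}) has only finitely many moments and no MGF, and is precisely a case where (c) applies but (b) does not, so as written your proof of (c) (and hence of (e), which invokes (c)) does not cover the stated generality. The fix is standard: the identity in (c) is a polynomial identity in the first $d$ moments, so either run your generating-function manipulations in the ring of formal power series truncated at order $d$, or argue directly that $\E[\Lambda^d] = B_d\big(\kappa_1(\Lambda),\ldots,\kappa_d(\Lambda)\big)$ (the classical moment--cumulant relation, valid whenever $d$ moments exist) together with $\kappa_j(\Lambda) = \kappa_j\, p_j(\bvec{\lambda})$, which follows from additivity of cumulants over independent summands and the scaling $\kappa_j(cX) = c^j\kappa_j(X)$. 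With that adjustment the argument is complete.
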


The paper is structured as follows. Section \ref{Section:Examples} provides several examples afforded by
the theorems above. Proofs of Theorems \ref{Theorem:Main} and \ref{Theorem:Second} appear in Sections \ref{Section:Proof1} and \ref{Section:Proof2}, respectively.  Section \ref{Section:End} concludes with some brief remarks.

\medskip\noindent\textbf{Acknowledgments.} We thank the referee for many helpful comments.

\section{Examples}\label{Section:Examples}
The norm $\norm{\cdot}_{\vec{X},d}$ defined in \eqref{eq:NormHermitian} is determined by its unit ball.
This provides one way to visualize the properties of random vector norms.
We consider a few examples here and refer the reader to \cite[Sec.~2]{Ours} for further examples and details.

\subsection{Normal Random Variables} 
Suppose $d\geq 2$ is an even integer and $\vec{X}$ is a random vector whose entries are independent normal random variables with mean $\mu$ and variance $\sigma^2$.  The example in \cite[(2.12)]{Ours} illustrates
\begin{equation*}
        \norm{A}_{\X,d}^d=\sum_{k=0}^{\frac{d}{2}}  \frac{\mu^{2k} (\tr A)^{2k}}{(2k)!} 
        \cdot  \frac{\sigma^{d-2k} \norm{A}_{\operatorname{F}}^{d-2k}}{2^{\frac{d}{2}-k} (\frac{d}{2}-k)!}
        \quad \text{for $A \in \h_n$},
\end{equation*}
in which $\norm{\cdot}_{\operatorname{F}}$ is the Frobenius norm.  For $d=2$ the extension to $\M_n$ guaranteed by 
Theorem \ref{Theorem:Big} is $\norm{Z}_{\X,2}^2= \tfrac{1}{2} \sigma^2 \tr(Z^*\!Z) + \tfrac{1}{2} \mu^2 (\tr Z^*)(\tr Z)$ \cite[p.~816]{Ours}.

Now let $n=2$.
If $\mu=0$, the restrictions of $\norm{\cdot}_{\vec{X},d}$ to $\R^2$ (whose elements are identified with diagonal matrices)
reproduce multiples of the Euclidean norm.
If $\mu\neq 0$, then the unit circles for $\norm{\cdot}_{\vec{X},d}$ are approximately elliptical;
see Figure \ref{Figure:Normal}.

\begin{figure}
\centering
\includegraphics[width = 0.475\textwidth]{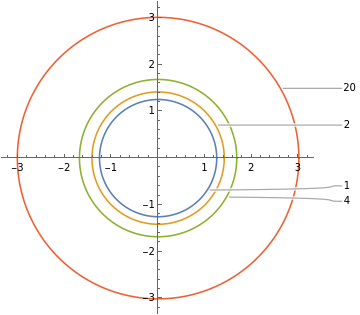}\hfill 
\includegraphics[width = 0.475\textwidth]{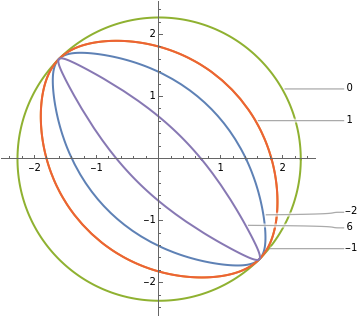}
\caption{(\textsc{Left}) Unit circles for $\|\cdot\|_{\vec{X},d}$ with $d=1, 2, 4, 20$, in which $X_1$ and $X_2$ are standard normal random variables. (\textsc{Right}) Unit circles for $\|\cdot\|_{\vec{X},10}$, in which $X_1$ and $X_2$ are normal random variables with means $\mu=-2, -1, 0, 1, 6$ and variance $\sigma^2=1$.}
\label{Figure:Normal}
\end{figure}


\subsection{Standard Exponential Random Variables} 
If $d\geq 2$ is an even integer and $\vec{X}$ is a random vector whose entries are independent standard exponential random variables, then $\norm{A}_{\vec{X},d}^d$ equals the \emph{complete homogeneous symmetric polynomial} 
$h_d(\lambda_1, \lambda_2, \ldots, \lambda_n)=\sum_{1\leq k_1\leq\cdots\leq k_d\leq n} \lambda_{k_1}\lambda_{k_2}\cdots \lambda_{k_d}$
in the eigenvalues $\lambda_1, \lambda_2, \ldots, \lambda_n$ \cite{First}. 
For $d=4$ the extension to $\M_n$ guaranteed by Theorem \ref{Theorem:Big}
is \cite[(9)]{First}:
\begin{align*}
\norm{Z}_4^4 &= \frac{1}{24} \big(
(\tr Z)^2 \tr(Z^*)^2 + \tr(Z^*)^2 \tr(Z^2) + 4 \tr(Z) \tr(Z^*) \tr(Z^*Z)    \nonumber \\ 
& \qquad + 
 2 \tr(Z^*Z)^2 + (\tr Z)^2 \tr(Z^{*2}) + \tr(Z^2) \tr(Z^{*2}) +  4 \tr(Z^*) \tr(Z^*Z^2) \nonumber \\
  & \qquad + 4 \tr(Z) \tr(Z^{*2}Z) + 2 \tr(Z^*ZZ^*Z) + 4 \tr(Z^{*2}Z^2) \big). 
\end{align*}
The unit balls for these norms are illustrated in Figure \ref{Figure:Exponentials} (\textsc{Left}).

\begin{figure}
\centering
\includegraphics[width = 0.475\textwidth]{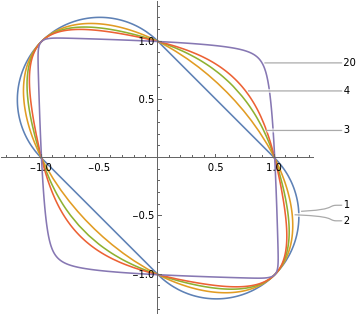}
\hfill\includegraphics[width = 0.475\textwidth]{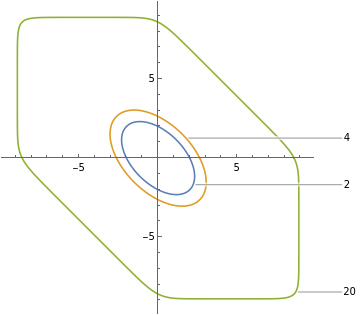}
\caption{(\textsc{Left}) Unit circles for $\|\cdot\|_{\vec{X},d}$ with $d=1, 2, 3, 4, 20$, in which $X_1$ and $X_2$ are standard exponentials. (\textsc{Right}) Unit circles for $\norm{\cdot}_{\vec{X},d}$ with $d=2, 4, 20$, in which $X_1$ and $X_2$ are Bernoulli with $q=0.5$.}
\label{Figure:Exponentials}
\end{figure}


\subsection{Bernoulli Random Variables} A \emph{Bernoulli} random variable is a discrete random variable $X$ defined by $\mathbb{P}(X=k)=q^k(1-q)^{1-k}$ for $k=0,1$ and $0<q<1$. Suppose $d$ is an even integer and $\vec{X}$ is a random vector whose entries are independental Bernoulli random variables with parameter $q$. 

\begin{remark}
An expression for $\norm{A}^d_{\vec{X},d}$ appears in \cite[Sec.~2.7]{Ours}. However, there is a missing multinomial coefficient. The correct expression for$\norm{A}^d_{\vec{X},d}$ is given by \begin{equation*}
    \norm{A}^d_{\vec{X},d}=\frac{1}{d!}\sum_{i_1+i_2+\cdots+i_n=d}
    \binom{d}{i_1,i_2,\ldots,i_n}q^{|I|} \lambda_1^{i_1}\lambda_2^{i_2}\cdots \lambda_n^{i_n},
\end{equation*} 
in which $|I|$ is the number of nonzero $i_k$; that is,
$I = \{ k : i_k \neq 0\}$. We thank the anonymous referee for pointing out the typo in \cite[Sec.~2.7]{Ours}.
Figures \ref{Figure:Exponentials} (\textsc{Right}) 
and \ref{Figure:Bernoulli} illustrate the unit balls for these norms in a variety of cases. 
\end{remark}

\begin{figure}
\centering
\includegraphics[width = 0.475\textwidth]{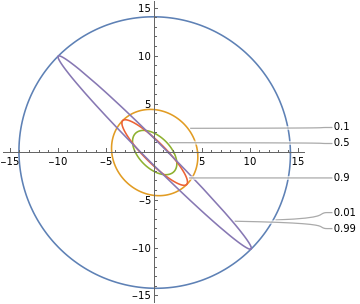}
\hfill
\includegraphics[width = 0.475\textwidth]{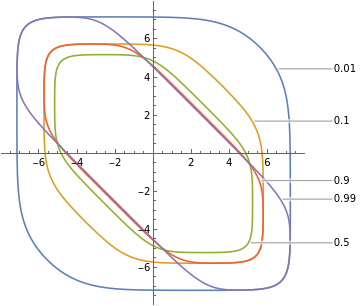}
\caption{Unit circles for $\|\cdot\|_{\vec{X},d}$, in which $X_1$ and $X_2$ are Bernoulli with varying parameter $q$ and with $d=2$ (\textsc{Left}) and $d=10$ (\textsc{Right}).}
\label{Figure:Bernoulli}
\end{figure}


\subsection{Pareto Random Variables} 
Suppose $\alpha, x_m>0$. A random variable $X$ distributed according to the probability density function
\begin{align*}
f_X(t)=
\begin{cases}
\dfrac{\alpha x_m^{\alpha}}{t^{\alpha+1}} & t\geq x_m,\\ 
0 & t<x_m,
\end{cases}
\end{align*} 
is a \emph{Pareto} random variable with parameters $\alpha$ and $x_m$. Suppose $\vec{X}$ is a random vector whose entries are Pareto random variables. Then $\norm{A}_{\vec{X},d}$ exists whenever $\alpha>d$ \cite[Sec.~2.10]{Ours}.

Suppose $d=2$ and $\vec{X}$ is a random vector whose entries are independent Pareto random variables with  $\alpha>2$ and $x_m=1$. If $n=2$, then
\begin{align*}
\norm{A}_{\vec{X},2}^2=\frac{\alpha}{2}\Big( \frac{\lambda_1^2}{\alpha-2}+\frac{2\alpha \lambda_1\lambda_2}{(\alpha-1)^2}+\frac{\lambda_2^2}{\alpha-2}     \Big).
\end{align*}
Figure \ref{Pareto124} (\textsc{Left}) illustrates the unit circles for $\norm{\cdot}_{\vec{X},2}$ with varying $\alpha$.
As $\alpha\to \infty$ the unit circles approach the parallel lines at $\lambda_2=\pm \sqrt{2}-\lambda_1$; that is, $|\tr A|^2 = 2$. Figure \ref{Pareto124} (\textsc{Right}) depicts the unit circles for $\norm{\cdot}_{\vec{X},d}$ with fixed $\alpha$ and varying $d$. 

\begin{figure}
\centering
\includegraphics[width = 0.45\textwidth]{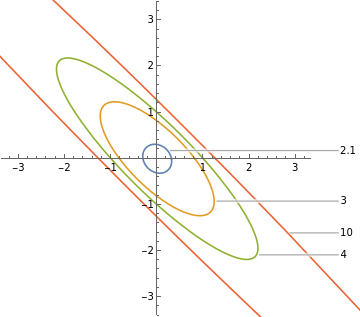}
\hfill
\includegraphics[width = 0.45\textwidth]{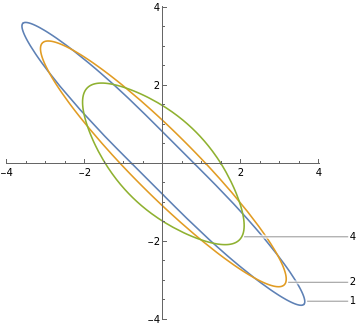}
\caption{(\textsc{Left}) Unit circles for $\norm{\cdot}_{\vec{X},2}$, in which $X_1$ and $X_2$ are independent Pareto random variables with $\alpha=2.1, 3, 4, 10$ and $x_m=1$. (\textsc{Right}) Unit circles for $\norm{\cdot}_{\vec{X},d}$, in which $X_1$ and $X_2$ are independent Pareto random variables with $\alpha=5$ and $p=1, 2, 4$. }
\label{Pareto124}
\end{figure}


\section{Proof of Theorem \ref{Theorem:Main}}\label{Section:Proof1}
The proof of Theorem \ref{Theorem:Main} follows from Propositions \ref{Proposition:OnlyIf} and \ref{Proposition:RntoHermitian} below.
       
\begin{proposition}\label{Proposition:OnlyIf}
If $\norm{\cdot}$ is a weakly unitarily invariant norm on $\h_n$, then there is a symmetric 
norm $f$  on $\mathbb{R}^n$ such that $\norm{A}=f(  \boldsymbol{\lambda}(A))$ for all $A\in \h_n$.
\end{proposition}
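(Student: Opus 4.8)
The plan is to build $f$ directly from $\norm{\cdot}$ using the diagonal embedding of $\R^n$ into $\h_n$. Define $\iota:\R^n\to\h_n$ by $\iota(\vec{x})=\diag(x_1,x_2,\ldots,x_n)$ and set $f(\vec{x})=\norm{\iota(\vec{x})}$. Since $\iota$ is $\R$-linear and injective, $f$ inherits the norm axioms from $\norm{\cdot}$: positivity and definiteness hold because $\iota(\vec{x})=0$ only when $\vec{x}=0$; absolute homogeneity follows from $\iota(c\vec{x})=c\,\iota(\vec{x})$ together with homogeneity of $\norm{\cdot}$; and the triangle inequality follows from $\iota(\vec{x}+\vec{y})=\iota(\vec{x})+\iota(\vec{y})$. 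Thus $f$ is a genuine norm on $\R^n$ with no further work.

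Next I would verify that $f$ is symmetric, meaning invariant under coordinate permutations. For $\sigma\in S_n$ let $P_\sigma$ be the associated permutation matrix, which is unitary. Conjugation by $P_\sigma$ permutes diagonal entries, so $P_\sigma^*\iota(\vec{x})P_\sigma=\iota(\vec{x}_\sigma)$, where $\vec{x}_\sigma=(x_{\sigma(1)},\ldots,x_{\sigma(n)})$. Weak unitary invariance then gives $f(\vec{x}_\sigma)=\norm{P_\sigma^*\iota(\vec{x})P_\sigma}=\norm{\iota(\vec{x})}=f(\vec{x})$, which is exactly the required symmetry.

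Finally, to obtain $\norm{A}=f(\boldsymbol{\lambda}(A))$ for every $A\in\h_n$, I would invoke the spectral theorem: write $A=UDU^*$ with $U$ unitary and $D=\diag(\lambda_1,\ldots,\lambda_n)=\iota(\boldsymbol{\lambda}(A))$. Weak unitary invariance (applied with the unitary $U^*$) yields $\norm{A}=\norm{UDU^*}=\norm{D}=f(\boldsymbol{\lambda}(A))$, completing the argument.

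I do not expect a genuine obstacle here: this ``only if'' direction is an essentially formal consequence of the spectral theorem together with permutation matrices being unitary, and it requires none of the majorization machinery. The only point demanding care is the precise meaning of \emph{symmetric}, which in the Hermitian setting is permutation invariance alone rather than the full symmetric-gauge condition; individual sign changes are unavailable because no unitary conjugation negates a single eigenvalue. The substantive work, including Birkhoff's theorem, is reserved for the converse direction in Proposition \ref{Proposition:RntoHermitian}.
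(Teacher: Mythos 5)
Your proof is correct and takes essentially the same approach as the paper's: both rest on the diagonal embedding $\vec{x}\mapsto\diag(\vec{x})$, the unitarity of permutation matrices, and unitary diagonalization combined with weak unitary invariance. The only difference is organizational — you define $f$ explicitly via the embedding and then verify symmetry and $\norm{A}=f(\boldsymbol{\lambda}(A))$, whereas the paper first argues abstractly that $\norm{A}$ is a symmetric function of the eigenvalues and then identifies that function with $\vec{a}\mapsto\norm{\diag\vec{a}}$; your ordering is, if anything, slightly cleaner since it sidesteps any well-definedness discussion.
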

        
\begin{proof}
   Hermitian matrices are unitarily diagonalizable. Since $\norm{\cdot}$ is weakly unitarily invariant, $\norm{A}=\norm{D}$, in which $D$ is a diagonalization of $A$. Consequently, $\norm{A}$  must be a function in the eigenvalues of $A$. Moreover, any permuation of the entries in $D$ is obtained by conjugating $D$ by a permutation matrix, which is unitary. Therefore, $\norm{A}$ is a symmetric function in the eigenvalues of $A$. 
    In particular, $\norm{A}=f(  \boldsymbol{\lambda}(A) )$ for some symmetric function $f$.   Given 
    $\vec{a}=( a_1, a_2,\dots, a_n)\in \R^n$,  define the Hermitian matrix
    \begin{equation*}
     \diag{\vec{a}} = 
      \begin{bmatrix}
      a_1 & & &\\
      & a_2 & &\\
      & & \ddots & \\
      & & & a_n
     \end{bmatrix}.
    \end{equation*} 
    Then $\bm{\lambda}(\diag{\vec{a}}) = P\vec{a}$ for some permutation matrix $P$. Symmetry of $f$ implies
    \begin{equation*}
        f(\vec{a}) =f(P\vec{a}) = f\big(\bm{\lambda}(\diag{\vec{a}})\big) = \norm{\diag{\vec{a}}}.
    \end{equation*} 
    Consequently, $f$ inherits the properties of a norm on $\R^n$.
\end{proof}

Let $\widetilde{\vec{x}}=(\widetilde{x}_1,\widetilde{x}_2, \ldots, \widetilde{x}_n)$ denote the nondecreasing rearrangement of 
$\vec{x}= \break (x_1, x_2, \ldots, x_n)\in \R^n$. Then $\vec{y}$ \emph{majorizes} $\vec{x}$, denoted $\vec{x}\prec \vec{y}$, if
\begin{equation}\label{eq:Majorization}
\sum_{i=1}^n \widetilde{x}_i=\sum_{i=1}^n \widetilde{y}_i
\quad \text{and} \quad\sum_{i=1}^k \widetilde{x}_i\leq \sum_{i=1}^k \widetilde{y}_i \quad \text{for $1 \leq k \leq n-1$}.
\end{equation} 
Recall that a matrix with nonnegative entries is \emph{doubly stochastic} if each row and column sums to $1$.
The next result is due to Hardy, Littlewood and P\'olya \cite{Hardy}.

\begin{lemma}\label{Lemma:MajorStoch}
If $\vec{x}\prec\vec{y}$, then there exists a doubly stochastic matrix $D$ such that $\vec{y} = D \vec{x}$. 
\end{lemma}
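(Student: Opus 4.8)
The plan is to prove this as an instance of the classical Hardy--Littlewood--P\'olya argument built from elementary \emph{transfer} (Robin Hood) operations, which dovetails with the doubly stochastic language already in play. First I would reduce to the case in which $\vec{x}$ and $\vec{y}$ are written in nondecreasing order. This reduction is free: permutation matrices are doubly stochastic and products of doubly stochastic matrices are again doubly stochastic, so if $\widetilde{\vec{y}} = \widetilde{D}\,\widetilde{\vec{x}}$ for some doubly stochastic $\widetilde{D}$, and $\vec{x} = P_1 \widetilde{\vec{x}}$, $\vec{y} = P_2 \widetilde{\vec{y}}$ with $P_1, P_2$ permutation matrices, then $D = P_2 \widetilde{D} P_1^{-1}$ is doubly stochastic and satisfies $\vec{y} = D\vec{x}$. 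Hence it suffices to produce $\widetilde{D}$ for the sorted vectors.

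For the sorted case I would induct on the number of coordinates in which the current iterate disagrees with $\widetilde{\vec{y}}$. If $\widetilde{\vec{x}} = \widetilde{\vec{y}}$, take $\widetilde{D} = I$. Otherwise let $j$ be the least index with $\widetilde{x}_j \neq \widetilde{y}_j$; equality of the first $j-1$ partial sums together with $\sum_{i=1}^{j}\widetilde{x}_i \leq \sum_{i=1}^{j}\widetilde{y}_i$ forces $\widetilde{x}_j < \widetilde{y}_j$. Since the total sums agree, some later coordinate must overshoot, so let $k$ be the least index exceeding $j$ with $\widetilde{x}_k > \widetilde{y}_k$; then $\widetilde{x}_j < \widetilde{y}_j \leq \widetilde{y}_k < \widetilde{x}_k$. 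I would then apply the transfer $T = (1-t)I + tQ$, where $Q$ interchanges coordinates $j$ and $k$ and $t \in (0,1]$ is chosen so that $T\widetilde{\vec{x}}$ raises $\widetilde{x}_j$ and lowers $\widetilde{x}_k$ each by exactly $\delta = \min(\widetilde{y}_j - \widetilde{x}_j,\ \widetilde{x}_k - \widetilde{y}_k) > 0$. Such a $T$ is a convex combination of the identity and a transposition, hence doubly stochastic.

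The hard part will be confirming that a single transfer genuinely makes progress. One must verify that the new iterate (re-sorted if necessary, which only multiplies by a further permutation matrix, hence stays doubly stochastic) still satisfies the majorization $\prec \widetilde{\vec{y}}$, so the inductive hypothesis applies, while agreeing with $\widetilde{\vec{y}}$ in at least one additional coordinate --- namely whichever of $j, k$ attains the minimum defining $\delta$ --- without destroying any existing agreement. The partial-sum bookkeeping is routine but delicate: the transfer alters only the cumulative sums indexed between $j$ and $k$, and the choices of $j$ and $k$ guarantee that these sums move toward, but never past, the corresponding sums of $\widetilde{\vec{y}}$. Iterating at most $n-1$ times and multiplying the resulting transfers (and intervening permutations) yields the doubly stochastic $\widetilde{D}$ with $\widetilde{\vec{y}} = \widetilde{D}\,\widetilde{\vec{x}}$, which completes the reduction.

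As an alternative I would record the Birkhoff-based route, which is appealing given the prominence of Birkhoff's theorem elsewhere in the paper. By that theorem the set $\{D\vec{x} : D \text{ doubly stochastic}\}$ coincides with $\operatorname{conv}\{P\vec{x} : P \in S_n\}$, and one then shows that $\vec{y}$ lies in this permutohedron directly from the majorization inequalities. However, establishing that nontrivial inclusion requires a separating-hyperplane or linear-programming duality argument, so the explicit transfer construction above seems the more economical and self-contained path.
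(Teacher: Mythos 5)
Your overall strategy is the right one: the paper gives no proof of this lemma at all (it is quoted from \cite{Hardy}), and the classical Hardy--Littlewood--P\'olya transfer argument is exactly what a self-contained treatment would supply. Your reduction to sorted vectors is correct, and your description of why majorization survives a transfer is also correct (the cumulative sums between $j$ and $k$ increase by $\delta$ but never pass those of $\widetilde{\vec{y}}$, and sorting the iterate can only decrease its leading partial sums). The genuine gap is in the step you yourself flag as the hard part, and it is not merely delicate bookkeeping: with your choice of indices ($j$ the \emph{first} disagreement, $k$ the \emph{first} overshoot after $j$), the transferred vector need not be sorted, and the re-sorting permutation can destroy the very agreement the transfer created, so your induction measure need not increase. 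Concretely, take $\widetilde{\vec{x}}=(0,\,0.5,\,11.5)$ and $\widetilde{\vec{y}}=(1,\,5,\,6)$, which satisfy $\widetilde{\vec{x}} \prec \widetilde{\vec{y}}$ in the paper's sense. Then $j=1$, $k=3$, $\delta=\min(1,\,5.5)=1$, and the transfer produces $(1,\,0.5,\,10.5)$, which re-sorts to $(0.5,\,1,\,10.5)$. This agrees with $\widetilde{\vec{y}}$ in \emph{no} coordinate, the same count as before the step: the new entry $1=\widetilde{y}_1$ was pushed by the sort into position $2$, where $\widetilde{y}_2=5$. The same example refutes your ``at most $n-1$ iterations'' claim, since this pair requires three transfers under your greedy choice while $n-1=2$. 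So the claimed invariant (one new positional agreement per step, none destroyed) is false as stated, and the induction does not close.

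The repair is the classical choice of indices, which eliminates re-sorting altogether: let $k$ be the \emph{least} index with $\widetilde{x}_k>\widetilde{y}_k$, and let $j$ be the \emph{greatest} index less than $k$ with $\widetilde{x}_j<\widetilde{y}_j$ (such a $j$ exists because $\sum_{i\leq k}\widetilde{x}_i \leq \sum_{i\leq k}\widetilde{y}_i$ forces an undershoot before $k$). With this choice every coordinate strictly between $j$ and $k$ satisfies $\widetilde{x}_l=\widetilde{y}_l$, and after transferring $\delta=\min(\widetilde{y}_j-\widetilde{x}_j,\ \widetilde{x}_k-\widetilde{y}_k)$ the iterate is \emph{still nondecreasing}: its $j$th entry is at most $\widetilde{y}_j\leq\widetilde{y}_{j+1}=\widetilde{x}_{j+1}$ (or at most $\widetilde{y}_j \leq \widetilde{y}_k \leq \widetilde{x}_k - \delta$ when $k=j+1$), and its $k$th entry is at least $\widetilde{y}_k\geq\widetilde{y}_{k-1}=\widetilde{x}_{k-1}$. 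Hence no permutation ever intervenes, coordinates outside $\{j,k\}$ are untouched, the coordinate attaining $\delta$ agrees with $\widetilde{\vec{y}}$ permanently, and your partial-sum verification goes through verbatim; the process then genuinely terminates in at most $n-1$ transfers. Your Birkhoff/permutohedron alternative is also a valid route, but as you note it requires the nontrivial inclusion of $\widetilde{\vec{y}}$ in the convex hull of permutations of $\widetilde{\vec{x}}$ (separating hyperplane plus the rearrangement inequality), so the corrected transfer argument remains the more economical choice.
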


The next lemma is Birkhoff's \cite{Birkhoff};  $n^2-n+1$ works in place of $n^2$ \cite[Thm.~8.7.2]{Horn}.

\begin{lemma}\label{stochasticisPerms}
If $D \in \M_n$ is doubly stochastic, then there exist permutation matrices $P_1,P_2,\ldots,P_{n^2} \in \M_n$
    and nonnegative numbers $c_1,c_2,\ldots,c_{n^2}$ satisfying $\sum_{i=1}^{n^2} c_i = 1$ such that
    $D = \sum_{i=1}^{n^2} c_i P_i$.
\end{lemma}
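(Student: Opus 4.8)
The plan is to prove Birkhoff's theorem by induction on the number of nonzero entries of $D$. The engine of the argument is the classical fact that every doubly stochastic matrix possesses a \emph{positive diagonal}: a permutation $\sigma$ of $\{1,2,\ldots,n\}$ with $D_{i\sigma(i)} > 0$ for every $i$. Granting this, the inductive step peels off a permutation matrix. Writing $P_\sigma$ for the permutation matrix associated with $\sigma$ and setting $c = \min_i D_{i\sigma(i)} > 0$, I would distinguish two cases. If $c = 1$, then every diagonal entry equals $1$ (the entries are nonnegative and each row sums to $1$), forcing $D = P_\sigma$ and settling the base case. If $c < 1$, then $D' = (1-c)^{-1}(D - c P_\sigma)$ is again doubly stochastic: subtracting $c$ from the $n$ diagonal entries keeps all entries nonnegative and lowers every row and column sum to $1-c$, which the normalization restores to $1$. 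Crucially, the entry attaining the minimum is driven to $0$, so $D'$ has strictly fewer nonzero entries than $D$.

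By the inductive hypothesis $D' = \sum_j c'_j P_j$ is a convex combination of permutation matrices, whence $D = c P_\sigma + (1-c)\sum_j c'_j P_j$ is one as well. For the count: a doubly stochastic matrix has at least $n$ nonzero entries (one per row) and at most $n^2$, and each peeling step removes at least one nonzero entry while the terminal permutation matrix contributes $n$; hence at most $n^2 - n + 1$ permutation matrices appear, which is the sharper bound recorded in the excerpt and comfortably within the claimed $n^2$.

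The one substantial step is the existence of the positive diagonal, which I expect to be the main obstacle and which I would obtain from Hall's marriage theorem. Form the bipartite graph on rows and columns joined by an edge whenever $D_{ij} > 0$; a positive diagonal is exactly a perfect matching. To verify Hall's condition, fix a set $S$ of rows and let $N(S)$ be the set of columns meeting some row of $S$. All nonzero entries in the rows of $S$ lie in columns of $N(S)$, so
\[
|S| = \sum_{i \in S} \sum_{j} D_{ij} = \sum_{i \in S}\sum_{j \in N(S)} D_{ij} \le \sum_{i=1}^n \sum_{j \in N(S)} D_{ij} = |N(S)|,
\]
using that the rows sum to $1$ and the columns sum to $1$. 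Thus $|N(S)| \ge |S|$ for every $S$, Hall's condition holds, and the matching exists. Alternatively, one could bypass the induction entirely by recognizing the doubly stochastic matrices as a compact convex polytope of dimension $(n-1)^2$, arguing that its extreme points are precisely the permutation matrices, and invoking Carath\'eodory's theorem; but the inductive argument above is more self-contained and already delivers the stated bound.
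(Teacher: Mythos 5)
Your proof is correct, but it differs from the paper in a basic way: the paper does not prove this lemma at all. It is stated as a known result, attributed to Birkhoff via citation, with the sharper count $n^2-n+1$ attributed to Horn and Johnson. What you have written is the classical proof of Birkhoff's theorem: induction on the number of nonzero entries, peeling off one permutation matrix per step, with the existence of a positive diagonal supplied by Hall's marriage theorem (equivalently, the Frobenius--K\"onig theorem). Your verification of Hall's condition via the row and column sums is correct, the matrix $D' = (1-c)^{-1}(D - cP_\sigma)$ is indeed doubly stochastic with strictly fewer nonzero entries, and your bookkeeping (at most $n^2$ nonzero entries to start, $n$ in the terminal permutation matrix, at least one entry killed per step) legitimately recovers the bound of $n^2 - n + 1$ permutation matrices, hence the stated $n^2$ after padding with zero coefficients. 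In short, your argument buys self-containedness: it replaces an external citation with a complete proof whose only nontrivial input is Hall's theorem, and it even reproves the refined count that the paper only quotes. The trade-off is length; in the context of the paper, where this lemma is a standard tool feeding into Lemma \ref{Lemma:EigenPerms}, the citation is the economical choice, but nothing in your proof is wrong.
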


For each $A \in \h_n$, recall that  $\boldsymbol{\lambda}(A)=(\lambda_1(A),\lambda_2(A), \ldots, \lambda_n(A))$ denotes the vector of eigenvalues 
    $\lambda_1(A) \geq \lambda_2(A) \geq \cdots \geq \lambda_n(A)$.    We regard $\boldsymbol{\lambda}(A)$ as a column vector
    for purposes of matrix multiplication.

\begin{lemma}\label{Lemma:EigenPerms}
If $A, B\in \h_n$, then there exist permutation matrices $P_1,P_2,\ldots,P_{n^2} \in \M_n$ and 
$c_1,c_2,\ldots,c_{n^2}\geq 0$  such that 
\begin{align*}
\bm{\lambda}(A+B) = \sum_{i = 1}^{n^2} c_{i} P_i(\bm{\lambda}(A) + \bm{\lambda}(B))
\quad \text{and}\quad 
\sum_{i = 1}^{n^2} c_i = 1.
\end{align*}
\end{lemma}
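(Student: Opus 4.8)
The plan is to reduce the claim to the classical majorization inequality for eigenvalues of sums of Hermitian matrices, and then feed that inequality through Lemmas \ref{Lemma:MajorStoch} and \ref{stochasticisPerms}. The heart of the matter is the assertion that, in the sense of \eqref{eq:Majorization}, $\bm{\lambda}(A)+\bm{\lambda}(B) \prec \bm{\lambda}(A+B)$; equivalently, that $\bm{\lambda}(A+B)$ is majorized (in the standard, ``top-$k$ sums'' sense) by the vector $\bm{\lambda}(A)+\bm{\lambda}(B)$ of coordinatewise sums of the decreasingly ordered eigenvalues. Observe that $\bm{\lambda}(A)+\bm{\lambda}(B)$ is already nonincreasing, being the sum of two nonincreasing vectors, so it coincides with its own decreasing rearrangement and no extra sorting is needed on that side.

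To establish the majorization I would invoke Ky Fan's maximum principle: for Hermitian $M$ and each $1 \le k \le n$,
\begin{equation*}
\sum_{i=1}^k \lambda_i(M) = \max\big\{ \tr(U^* M U) : U \in \C^{n \times k},\ U^*U = I_k\big\}.
\end{equation*}
Applying this with $M = A+B$ and using additivity of the trace gives, for $1 \le k \le n-1$,
\begin{equation*}
\sum_{i=1}^k \lambda_i(A+B) = \max_U \big[\tr(U^*AU) + \tr(U^*BU)\big] \le \sum_{i=1}^k \lambda_i(A) + \sum_{i=1}^k \lambda_i(B),
\end{equation*}
since the maximum of a sum is at most the sum of the maxima; the case $k = n$ is the equality $\tr(A+B) = \tr A + \tr B$. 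These are exactly the top-$k$ inequalities (plus equality of totals) defining standard majorization. Ky Fan's principle is itself standard and I would either cite it from \cite{Horn} or derive it in a line from the Courant--Fischer characterization of eigenvalues.

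With the majorization in hand, Lemma \ref{Lemma:MajorStoch}, applied with $\vec{x} = \bm{\lambda}(A)+\bm{\lambda}(B)$ and $\vec{y} = \bm{\lambda}(A+B)$, supplies a doubly stochastic matrix $D$ with $\bm{\lambda}(A+B) = D\big(\bm{\lambda}(A)+\bm{\lambda}(B)\big)$. Lemma \ref{stochasticisPerms} then writes $D = \sum_{i=1}^{n^2} c_i P_i$ with permutation matrices $P_i$ and nonnegative $c_i$ summing to $1$. Substituting gives $\bm{\lambda}(A+B) = \sum_{i=1}^{n^2} c_i P_i\big(\bm{\lambda}(A)+\bm{\lambda}(B)\big)$, as desired.

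The only real obstacle is bookkeeping with the majorization convention of \eqref{eq:Majorization}: because that definition orders entries \emph{nondecreasingly}, one must check that the Ky Fan inequalities for the top-$k$ sums convert correctly into the bottom-$k$ inequalities of \eqref{eq:Majorization} (using the common total to pass between the sum of the $k$ smallest and the sum of the $n-k$ largest), and that Lemma \ref{Lemma:MajorStoch} is invoked in the direction $\vec{x} \prec \vec{y} \Rightarrow \vec{y} = D\vec{x}$ with $\vec{x} = \bm{\lambda}(A)+\bm{\lambda}(B)$. Once this direction is pinned down, the remaining steps are immediate.
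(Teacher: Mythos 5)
Your proof is correct and takes essentially the same route as the paper's: the Ky Fan eigenvalue inequality (which the paper cites directly rather than deriving from the maximum principle) together with the trace equality gives the majorization, after which Lemmas \ref{Lemma:MajorStoch} and \ref{stochasticisPerms} finish exactly as you describe. In fact your bookkeeping is more careful than the paper's own write-up, which---under its nondecreasing-rearrangement definition \eqref{eq:Majorization}---states the majorization as $\bm{\lambda}(A+B) \prec \bm{\lambda}(A)+\bm{\lambda}(B)$ (the opposite orientation to yours) before invoking Lemma \ref{Lemma:MajorStoch} to conclude $\bm{\lambda}(A+B) = D\big(\bm{\lambda}(A)+\bm{\lambda}(B)\big)$; your orientation, $\bm{\lambda}(A)+\bm{\lambda}(B) \prec \bm{\lambda}(A+B)$ with $\vec{x} = \bm{\lambda}(A)+\bm{\lambda}(B)$, is the one actually consistent with the paper's stated conventions.
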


\begin{proof}
The Ky Fan eigenvalue inequality \cite{KyFan} asserts that
\begin{equation}\label{eq:KyFan}
    \sum_{i=1}^{k} \lambda_i(A+B) \leq \sum_{i=1}^{k} \big(\lambda_i(A) + \lambda_i(B)\big)
    \quad \text{for all $1\leq k\leq n$}.
\end{equation} 
The sum of the eigenvalues of a matrix is its trace. Consequently,
\begin{align*} 
\sum_{i=1}^{n} \lambda_i(A+B) = \tr (A+B) = \tr  A + \tr  B = \sum_{i=1}^{n} \big(\lambda_i(A) + \lambda_i(B)\big),
\end{align*} 
so equality holds in \eqref{eq:KyFan} for $k=n$. Thus, $\bm{\lambda}(A+B) \prec \bm{\lambda}(A) + \bm{\lambda}(B)$. 
Lemma \ref{Lemma:MajorStoch} provides a doubly stochastic matrix $D$ such that 
$\bm{\lambda}(A+B) = D(\bm{\lambda}(A) + \bm{\lambda}(B))$. 
Lemma \ref{stochasticisPerms} provides the desired permutation matrices and nonnegative scalars.
\end{proof}

The following proposition completes the proof of Theorem \ref{Theorem:Main}.  

\begin{proposition}\label{Proposition:RntoHermitian}
    If $f$ is a symmetric norm on $\R^n$, then 
    $\norm{A}=f(\boldsymbol{\lambda}(A))$ defines a weakly unitarily invariant norm on $\h_n$.
\end{proposition}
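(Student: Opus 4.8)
The plan is to verify that $\norm{\cdot}$ satisfies the three norm axioms together with weak unitary invariance. Weak unitary invariance is immediate: conjugation by a unitary $U$ preserves the spectrum, so $\boldsymbol{\lambda}(U^*AU)=\boldsymbol{\lambda}(A)$ and hence $\norm{U^*AU}=f(\boldsymbol{\lambda}(U^*AU))=f(\boldsymbol{\lambda}(A))=\norm{A}$. Positive definiteness is equally direct: since $f$ is a norm, $\norm{A}=f(\boldsymbol{\lambda}(A))\geq 0$, with equality if and only if $\boldsymbol{\lambda}(A)=\vec{0}$; because $A$ is Hermitian and therefore unitarily diagonalizable, this happens exactly when $A=0$.

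For absolute homogeneity, first observe that for real $c$ the eigenvalues of $cA$ are the numbers $c\lambda_i(A)$, so $\boldsymbol{\lambda}(cA)$ is some permutation of $c\,\boldsymbol{\lambda}(A)$. Invoking the symmetry of $f$ to discard this permutation and then its homogeneity as a norm on $\R^n$ gives $\norm{cA}=f(\boldsymbol{\lambda}(cA))=f(c\,\boldsymbol{\lambda}(A))=|c|\,f(\boldsymbol{\lambda}(A))=|c|\,\norm{A}$. The symmetry of $f$ is precisely what lets the reordering of eigenvalues (which is unavoidable when $c<0$) be absorbed harmlessly.

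The triangle inequality is the substantive step, and it is where Lemma \ref{Lemma:EigenPerms} does the work. Given $A,B\in\h_n$, that lemma furnishes permutation matrices $P_1,\ldots,P_{n^2}$ and nonnegative weights $c_1,\ldots,c_{n^2}$ summing to $1$ with $\boldsymbol{\lambda}(A+B)=\sum_i c_i P_i(\boldsymbol{\lambda}(A)+\boldsymbol{\lambda}(B))$. I would then estimate
\begin{align*}
\norm{A+B}=f\Big(\sum_{i=1}^{n^2} c_i P_i(\boldsymbol{\lambda}(A)+\boldsymbol{\lambda}(B))\Big)
&\leq \sum_{i=1}^{n^2} c_i\, f\big(P_i(\boldsymbol{\lambda}(A)+\boldsymbol{\lambda}(B))\big)\\
&= \sum_{i=1}^{n^2} c_i\, f(\boldsymbol{\lambda}(A)+\boldsymbol{\lambda}(B))=f(\boldsymbol{\lambda}(A)+\boldsymbol{\lambda}(B)),
\end{align*}
using subadditivity and homogeneity of $f$ for the inequality, the symmetry of $f$ to delete each $P_i$ in the second line, and $\sum_i c_i=1$ for the final equality. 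A last application of the triangle inequality for $f$ yields $f(\boldsymbol{\lambda}(A)+\boldsymbol{\lambda}(B))\leq f(\boldsymbol{\lambda}(A))+f(\boldsymbol{\lambda}(B))=\norm{A}+\norm{B}$, completing the argument.

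The main obstacle is concentrated entirely in the triangle inequality, since eigenvalues of a sum are not the sum of eigenvalues. The majorization relation $\boldsymbol{\lambda}(A+B)\prec\boldsymbol{\lambda}(A)+\boldsymbol{\lambda}(B)$, repackaged by Lemma \ref{Lemma:EigenPerms} into a convex combination of permutations, is exactly the bridge that reduces the matrix triangle inequality to the vector one, with the symmetry and convexity of $f$ doing the rest. Everything else is routine.
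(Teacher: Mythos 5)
Your proof is correct and follows essentially the same route as the paper's: the routine verification of invariance, positive definiteness, and homogeneity (with symmetry of $f$ absorbing the eigenvalue reordering), and the triangle inequality reduced to the vector case via Lemma \ref{Lemma:EigenPerms} and the convexity estimate $f\big(\sum_i c_i P_i \vec{v}\big) \leq \sum_i c_i f(P_i \vec{v}) = f(\vec{v})$. No gaps to report.
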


\begin{proof}
    The function $\norm{A}=f(\boldsymbol{\lambda}(A))$ is symmetric in the eigenvalues of $A$, so it is
    weakly unitarily invariant. It remains to show that $\norm{\cdot}$ defines a norm on $\h_n$. 
    
    \medskip\noindent\textsc{Positive Definiteness}. A Hermitian matrix $A = 0$ if and only if $\bm{\lambda}(A) = 0$. Thus, 
    the positive definiteness of $f$ implies the positive definiteness of $\norm{\cdot}$. 
    
    \medskip\noindent\textsc{Homogeneity}. If $c\geq 0$, then $\bm{\lambda}(cA) = c\bm{\lambda}(A)$. If $c<0$, then
    \begin{equation*}
        \bm{\lambda}(cA) = c
        \begin{bmatrix}
        && 1  \\
        & \iddots & \\
        1 & &
        \end{bmatrix}
        \bm{\lambda}(A).
    \end{equation*} 
    Then the homogeneity and symmetry of $f$ imply that 
    \begin{equation*}
        \norm{cA} = f\big(\bm{\lambda}(cA)\big) = f\big(c\bm{\lambda}(A)\big) = |c| f\big(\bm{\lambda}(A)\big) = |c|\norm{A}.
    \end{equation*} 
    
    \medskip\noindent\textsc{Triangle Inequality}. 
    Suppose  that $A,B \in \h_n$. 
    Lemma \ref{Lemma:EigenPerms} ensures that there exist permutation matrices $P_1,P_2,\ldots,P_{n^2} \in \M_n$
    and nonnegative numbers $c_1,c_2,\ldots,c_{n^2}$ satisfying $\sum_{i=1}^{n^2} c_i = 1$ such that
    $D = \sum_{i=1}^{n^2} c_i P_i$.  Thus,
    \begin{equation*}
        \norm{A + B} = f\big(\bm{\lambda}(A+B)\big) = f\Big(\sum_{i = 1}^{n^2} c_{i} P_i\big(\bm{\lambda}(A) + \bm{\lambda}(B)\big)\Big).
    \end{equation*} 
    The triangle inequality and homogeneity of $f$ yield
    \begin{equation}\label{eq:Triangle}
        \norm{A+B}\leq \sum_{i=1}^{n^2} c_{i} f\big( P_i(\bm{\lambda}(A) + \bm{\lambda}(B))\big).
    \end{equation} 
    Since $f$ is permutation invariant and $\sum_{i = 1}^{n^2} c_i = 1$,
    \begin{equation*}
        \sum_{i=1}^{n^2} c_{i} f\big( P_i(\bm{\lambda}(A) + \bm{\lambda}(B))\big)
        = \sum_{i=1}^{n^2} c_{i} f\big(\bm{\lambda}(A) + \bm{\lambda}(B)\big)= f\big(\bm{\lambda}(A) + \bm{\lambda}(B)\big).
    \end{equation*} 
    Thus, the triangle inequality for $f$ and \eqref{eq:Triangle} yield
    \begin{equation*}
        \norm{A+B}
        \leq  f\big(\bm{\lambda}(A) + \bm{\lambda}(B)\big)
        \leq  f\big(\bm{\lambda}(A) \big)+ f\big(\bm{\lambda}(B)\big)
        =\norm{A}+\norm{B}. \qedhere
    \end{equation*}
\end{proof}

\section{Proof of Theorem \ref{Theorem:Second}}\label{Section:Proof2}

Let $\vec{X}$ be an iid random vector and define $f_{\vec{X},d}:\R^n\to \R$ by
\begin{equation}\label{eq:norm}
     f_{\vec{X},d}(\boldsymbol{\lambda}) = \left( \frac{\E |\langle\vec{X}, \bm{\lambda}\rangle|^d}{\Gamma(d+1)}\right)^{1/d}
     \quad \text{for $d \geq 1$}.
\end{equation}  
Since the entries of $\vec{X}$ are independent and identically distributed, $f_{\vec{X},d}$ is symmetric. 
In light of Theorem \ref{Theorem:Main}, it suffices to show that $f_{\vec{X},d}$ is a norm on $\R^n$;
the continuity remark at the end of Theorem \ref{Theorem:Second} is Proposition \ref{Proposition:Continuity} below.

\begin{proposition}
The function $f_{\vec{X},d}$ in \eqref{eq:norm} defines a norm on $\R^n$ for all $d\geq 1$.
\end{proposition}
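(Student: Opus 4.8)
The plan is to recognize $f_{\vec{X},d}$ as (a positive multiple of) the $L^d$-norm of a linear image of $\boldsymbol{\lambda}$, so that the norm axioms reduce to standard properties of $L^d(\Omega,\F,\P)$. Define the linear map $T\colon \R^n \to L^d(\Omega,\F,\P)$ by $T\boldsymbol{\lambda} = \langle \vec{X},\boldsymbol{\lambda}\rangle = \lambda_1 X_1 + \cdots + \lambda_n X_n$. Because each $X_i \in L^d$, Minkowski's inequality gives $\norm{T\boldsymbol{\lambda}}_{L^d} \leq \sum_i |\lambda_i|\,\norm{X_i}_{L^d} < \infty$, so that $f_{\vec{X},d}(\boldsymbol{\lambda}) = \Gamma(d+1)^{-1/d}\norm{T\boldsymbol{\lambda}}_{L^d}$ is finite and real-valued. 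The constant $\Gamma(d+1)^{-1/d} > 0$ plays no role in the norm axioms, so it suffices to check that $\boldsymbol{\lambda}\mapsto \norm{T\boldsymbol{\lambda}}_{L^d}$ is a genuine norm on $\R^n$.

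Homogeneity and the triangle inequality are then formal consequences of $T$ being linear and $\norm{\cdot}_{L^d}$ being a seminorm. First I would note $T(c\boldsymbol{\lambda}) = c\,T\boldsymbol{\lambda}$, whence $\norm{T(c\boldsymbol{\lambda})}_{L^d} = |c|\,\norm{T\boldsymbol{\lambda}}_{L^d}$. Next, since $T(\boldsymbol{\lambda}+\boldsymbol{\mu}) = T\boldsymbol{\lambda} + T\boldsymbol{\mu}$, Minkowski's inequality on $L^d(\Omega,\F,\P)$ yields $\norm{T(\boldsymbol{\lambda}+\boldsymbol{\mu})}_{L^d} \leq \norm{T\boldsymbol{\lambda}}_{L^d} + \norm{T\boldsymbol{\mu}}_{L^d}$. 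This is exactly the step that exploits $d \geq 1$ rather than $d \geq 2$: Minkowski's inequality holds precisely for $d \geq 1$, so the argument covers the full claimed range (and fails for $d<1$, where $\norm{\cdot}_{L^d}$ is not even subadditive).

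The one substantive point is positive definiteness, which is equivalent to injectivity of $T$: I must show that $\boldsymbol{\lambda}\neq 0$ forces $\E|\langle\vec{X},\boldsymbol{\lambda}\rangle|^d > 0$, i.e.\ that $\langle\vec{X},\boldsymbol{\lambda}\rangle$ is not almost surely zero. Suppose to the contrary that $\langle\vec{X},\boldsymbol{\lambda}\rangle = 0$ almost surely, and, reindexing if necessary, that $\lambda_1 \neq 0$. Writing $Y = \lambda_2 X_2 + \cdots + \lambda_n X_n$, which is independent of $X_1$, we obtain $X_1 = -Y/\lambda_1$ almost surely. Thus $X_1$ is a measurable function of $Y$ and therefore independent of itself, which forces $X_1$ to be almost surely constant, contradicting the assumed nondegeneracy of the entries of $\vec{X}$. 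Hence $\langle\vec{X},\boldsymbol{\lambda}\rangle$ is not almost surely zero and $f_{\vec{X},d}(\boldsymbol{\lambda}) > 0$.

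I expect this last step to be the main obstacle, since the other two axioms are immediate once the $L^d$ structure is in place; the work lies in ruling out a nonzero coefficient vector that produces an almost-surely-constant (here, zero) linear combination of the $X_i$. The independence-based argument above is the cleanest route, though one could equally argue via characteristic functions: the characteristic function of $\langle\vec{X},\boldsymbol{\lambda}\rangle$ is $t \mapsto \prod_{i=1}^n \phi(\lambda_i t)$ for the common characteristic function $\phi$ of the $X_i$, and degeneracy would force $|\phi(\lambda_i t)| = 1$ for all $t$, hence $|\phi|\equiv 1$ and degeneracy of $X_i$ for any index $i$ with $\lambda_i \neq 0$.
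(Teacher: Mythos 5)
Your proof is correct and follows essentially the same route as the paper: homogeneity from linearity of expectation, the triangle inequality from Minkowski's inequality (valid exactly for $d\geq 1$, which is what enables the extension from $d\geq 2$), and positive definiteness by showing a nonzero $\boldsymbol{\lambda}$ with $\langle\vec{X},\boldsymbol{\lambda}\rangle=0$ almost surely contradicts independence and nondegeneracy. If anything, your positive-definiteness step is slightly more complete than the paper's: where the paper simply asserts that independence contradicts writing $X_{i_1}$ as a linear combination of the other $X_{i_j}$, you spell out the mechanism --- $X_1$ almost surely equals a random variable independent of it, hence is independent of itself, hence almost surely constant, contradicting nondegeneracy.
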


\begin{proof} 
The proofs for homogeneity and the triangle inequality in \cite[Sec.~3.1]{Ours} are valid for $d\geq 1$. 
However, the proof for positive definiteness in \cite[Lemma~3.1]{Ours} requires $d\geq 2$. The proof below
holds for $d\geq 1$ and is simpler than the original.

\medskip\noindent\textsc{Positive Definiteness}.  
If $f_{\vec{X},d}(\boldsymbol{\lambda})=0$, then $\E|\langle \vec{X},\boldsymbol{\lambda}\rangle|^d=0$. 
The nonnegativity of $|\langle \vec{X},\boldsymbol{\lambda}\rangle|^d$ ensures that
\begin{equation}\label{eq:Independence}
    \lambda_1X_1+\lambda_2X_2+\cdots+\lambda_nX_n=0 
\end{equation} 
almost surely. Assume \eqref{eq:Independence} has a nontrivial solution $\boldsymbol{\lambda}$ with nonzero entries $\lambda_{i_1}, \lambda_{i_2}, \ldots, \lambda_{i_k}$. If $k=1$, then $X_{i_k}=0$ almost surely, which contradicts the nondegeneracy of our random variables. If $k>1$, then \eqref{eq:Independence} implies that
\begin{equation}\label{eq:Contradiction}
    X_{i_1}=a_{i_2}X_{i_2}+a_{i_3}X_{i_3}+\cdots +a_{i_k}X_{i_k}
\end{equation}
almost surely, in which $a_{i_j}=-\lambda_{i_j}/\lambda_{i_1}$.  
The independence of $X_{i_1}, X_{i_2}, \ldots, X_{i_k}$ contradicts \eqref{eq:Contradiction}. Relation \eqref{eq:Independence} therefore has no nontrivial solutions.

\medskip\noindent\textsc{Homogeneity}. 
This follows from the bilinearity of the inner product and linearity of expectation:
\begin{equation*}
f_{\vec{X},d}(c\boldsymbol{\lambda})
=\left( \frac{\E |c\langle\vec{X}, \bm{\lambda}\rangle|^d}{\Gamma(d+1)}\right)^{1/d}
=\left( \frac{|c|^d\E |\langle\vec{X}, \bm{\lambda}\rangle|^d}{\Gamma(d+1)}\right)^{1/d}
=|c|f_{\vec{X},d}(\boldsymbol{\lambda}).
\end{equation*}

\smallskip\noindent\textsc{Triangle Inequality}. 
For $\boldsymbol{\lambda}, \boldsymbol{\mu}\in \R^n$, define random variables 
$X=\langle \vec{X},\boldsymbol{\lambda}\rangle$ and $Y=\langle \vec{X},\boldsymbol{\mu}\rangle$.
Minkowski's inequality implies
\begin{equation*}
\big(\E\vert \langle \vec{X}, \boldsymbol{\lambda}+\boldsymbol{\mu}\rangle\vert^d\big)^{1/d}
=\big(\E|X+Y|^d\big)^{1/d} \leq \big(\E|X|^d\big)^{1/d}+\big(\E|Y|^d\big)^{1/d}. 
\end{equation*} 
The triangle inequality for $f_{\vec{X},d}$ follows.
\end{proof}

\begin{proposition}\label{Proposition:Continuity}
Suppose $\vec{X}$ is an iid random vector whose entries have at least $m$ moments. The function $f:\left[1,m\right] \to \mathbb{R}$ defined by $f(d) =\norm{A}_{\vec{X},d}$ is continuous for all $A\in\h_n$.
\end{proposition}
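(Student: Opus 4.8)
The plan is to fix $A \in \h_n$, write $S = \langle \vec{X}, \boldsymbol{\lambda}(A)\rangle = \lambda_1 X_1 + \cdots + \lambda_n X_n$ for the associated scalar random variable, and reduce the claim to the continuity of the single map $g(d) = \E|S|^d$ on $[1,m]$. Indeed, $f(d) = \big(g(d)/\Gamma(d+1)\big)^{1/d}$, and since $\Gamma(d+1)$ is continuous and strictly positive on $[1,m]$ while $(y,d) \mapsto y^{1/d}$ is continuous on $[0,\infty) \times [1,m]$ (including at $y=0$, which disposes of the case $A=0$), continuity of $g$ will immediately give continuity of $f$ by composition.

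First I would verify finiteness, i.e.\ $g(d) < \infty$ for every $d \in [1,m]$. The hypothesis that each $X_i$ has at least $m$ moments means $\E|X_i|^m < \infty$, so Minkowski's inequality gives $\big(\E|S|^m\big)^{1/m} \le \sum_{i=1}^n |\lambda_i|\,\big(\E|X_i|^m\big)^{1/m} < \infty$. For the intermediate exponents I would use the elementary pointwise bound
\begin{equation*}
|S|^d \le \max\{1, |S|^m\} \le 1 + |S|^m \qquad \text{for all } d \in [1,m],
\end{equation*}
which holds because $s \mapsto s^d$ stays below $1$ on $[0,1]$ and below $s^m$ on $[1,\infty)$ when $1 \le d \le m$. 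Taking expectations shows $g(d) < \infty$ throughout $[1,m]$.

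Finally I would establish continuity of $g$ by dominated convergence. Given $d_k \to d$ in $[1,m]$, we have $|S|^{d_k} \to |S|^d$ almost surely, and the same bound $|S|^{d_k} \le 1 + |S|^m$ furnishes an integrable dominating function independent of $k$. Hence $g(d_k) = \E|S|^{d_k} \to \E|S|^d = g(d)$, so $g$ is continuous on $[1,m]$ and the proof concludes. The main obstacle is producing a single integrable majorant valid uniformly over the entire interval; the estimate $|S|^d \le 1 + |S|^m$ is precisely what converts the $m$-moment hypothesis into such a majorant, and it is what lets dominated convergence run at the two endpoints as well as in the interior (avoiding any appeal to one-sided convexity arguments that would only yield continuity on the open interval).
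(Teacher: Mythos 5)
Your proof is correct and follows essentially the same route as the paper's: both reduce to continuity of $d \mapsto \E|S|^d$ via the dominated convergence theorem, using a majorant valid uniformly over $[1,m]$ (you use $1+|S|^m$ where the paper uses $|x|+|x|^m$, an immaterial difference) and then finish by continuity of the gamma function and of $(y,d)\mapsto y^{1/d}$. Your explicit treatment of finiteness via Minkowski and of the $A=0$ case are minor refinements of details the paper leaves implicit.
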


\begin{proof} 
Define the random variable  $Y = \langle\vec{X}, \bm{\lambda}\rangle$, in which  $\boldsymbol{\lambda}$ denotes the vector of eigenvalues of $A$. The random variable $Y$ is a measurable function defined on a probability space $(\Omega, \mathcal{F}, \mathbb{P})$. The pushforward measure of $Y$ is the probability measure $\mu_{Y}$  on $\mathbb{R}$ defined by $\mu_Y(E)=\mathbb{P}\big(Y^{-1}(E)\big)$ for all Borel sets $E$. Consequently, 
\begin{equation*}
    \Gamma(d+1)\big(f(d)\big)^d = \E|Y|^d  = \int |x|^d \, d\mu_{Y}.
\end{equation*} 
The bound $|x|^d \leq |x| + |x|^m$ holds for all $x\in \R$ and $1 \leq d \leq m$. Therefore, 
\begin{equation*}
    \int |x|^d \, d\mu_{Y} \leq  \int |x|  \, d\mu_Y + \int |x|^m \, d\mu_Y.
\end{equation*} If $d_i\to d$, then $\int |x|^{d_i}d\mu_Y\to \int |x|^{d}d\mu_Y$ by the dominated convergence theorem. Consequently, 
 $ \Gamma(d_i+1)\big(f(d_i)\big)^{d_i}\to\Gamma(d+1)\big(f(d)\big)^d $ whenever $d_i\to d$. The function $\Gamma(d+1)\big(f(d)\big)^d$ is therefore continuous in $d$. The continuity of the gamma function establishes continuity for $f^d$ and $f$.
\end{proof}

\section{Remarks}\label{Section:End}

\begin{remark}
A norm $\norm{\cdot}$ on $\M_n$ is \emph{weakly unitarily invariant} if $\norm{A}=\norm{U^*AU}$ for all $A\in \M_n$ and unitary $U \in \M_n$.  A norm $\Phi$ on the space $C(S)$ of continuous functions on the unit sphere $S\subset \C^n$ is a \emph{unitarily invariant function norm} if  $\Phi(f\circ U)=\Phi(f)$ for all $f\in C(S)$ and unitary $U \in \M_n$. 
Every weakly unitarily invariant norm $\norm{\cdot}$ on $\M_n$ is of the form
$\norm{A}=\Phi(f_A)$, in which $f_A\in C(S)$ is defined by $f_A(\vec{x})=\langle A\vec{x},\vec{x}\rangle$ and $\Phi$ is a unitarily invariant function norm \cite{BhatiaHolbrook}, \cite[Thm.~2.1]{Bhatia}.
\end{remark}

\begin{remark}
Remark 3.4 of \cite{Ours} is somewhat misleading. We state there that the entries of $\vec{X}$ are required to be identically distributed but not independent. To clarify, the entries of $\vec{X}$ being identically distributed guarantees that $\norm{\cdot}_{\vec{X},d}$ satisfies the triangle inequality on $\h_n$. The additional assumption of independence guarantees that $\norm{\cdot}_{\vec{X},d}$ is also positive definite.
\end{remark}

\bibliography{RandomNormsII}
\bibliographystyle{plain}
\end{document}